\newtheorem{theorem}{Theorem}
\newtheorem{lemma}{Lemma}
\newtheorem{corollary}{Corollary}
\newtheorem{proposition}{Proposition}
\newcommand{\Rmnum}[1]{\expandafter\@slowromancap\romannumeral #1@}
\begin{document}
\title{Every 2-connected, cubic, planar graph with faces of size at most 6 is Hamiltonian}
\author{Sihong Shao\footnotemark[1]
\and Yuxuan Wu\footnotemark[2]}

\renewcommand{\thefootnote}{\fnsymbol{footnote}}
\footnotetext[1]{CAPT, LMAM and School of Mathematical Sciences, Peking University, Beijing 100871, China. Email: \texttt{sihong@math.pku.edu.cn}}
\footnotetext[2]{School of Mathematical Sciences, Peking University, Beijing 100871, China. Email: \texttt{snrhzn@stu.pku.edu.cn}}

%
%
%
%
\maketitle

{\small {\textbf{Abstract}:}  We prove that every 2-connected, cubic, planar graph with faces of size at most 6 is Hamiltonian, and show that the 6-face condition is tight. Our results push the connectivity condition of the Barnette-Goodey conjecture to the weakest possible.

		{\textbf{Keywords:} Barnette's conjecture, planar graphs, Hamiltonian cycle } }
		
	\section{Introduction}
	
	Barnette's conjecture states that every 3-connected, planar, cubic, bipartite graph is Hamiltonian and has stood for decades without being proved or disproved. The Barnette-Goodey conjecture, a different and possibly weaker version of Barnette's conjecture, has been proved with computer assistance recently \cite{kardos} and replaces the bipartite condition with a face condition, namely every face should be of size at most 6. The main result of this work pushes the connectivity condition of the Barnette-Goodey conjecture to the weakest possible in view of the fact that every Hamiltonian cycle is 2-connected.

	\begin{theorem} \label{t1}
		Every 2-connected, planar, cubic graph with faces of size at most 6 is Hamiltonian.
	\end{theorem}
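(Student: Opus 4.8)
The plan is to reduce Theorem~\ref{t1} from the 2-connected case to the already-established 3-connected Barnette--Goodey result \cite{kardos}. The natural strategy is induction on the number of vertices (equivalently, on the number of faces): if the graph $G$ is 3-connected, the conclusion is immediate from \cite{kardos}, so the real work lies entirely in the 2-connected but not 3-connected case. Here $G$ admits a $2$-cut $\{u,v\}$, and the first step is to understand how such a cut must look in a cubic graph: since each of $u$ and $v$ has degree $3$, a $2$-cut separates $G$ into pieces glued along the two vertices in a very restricted way. I would first prove a structural lemma pinning down that in a cubic graph a $2$-edge-cut (or vertex $2$-cut) splits $G$ into two ``halves,'' each of which, after reconnecting the two cut vertices by a suitable edge or by suppressing degree-$2$ vertices, becomes a smaller cubic planar graph still obeying the face-size bound.

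The key steps, in order, are as follows. First, given a minimal $2$-cut, I would cut $G$ along it and, in each resulting piece, restore cubicity by adding a new edge between the two cut vertices (this is the standard operation for reducing connectivity problems in cubic graphs). Second, I would verify that each smaller graph is still $2$-connected, cubic, planar, and has all faces of size at most $6$ --- this requires checking that the newly added edge does not create a face larger than $6$, which is where the face-size hypothesis must be used carefully, since merging the outer region across the cut could in principle enlarge a face. Third, by the induction hypothesis each smaller piece is Hamiltonian, yielding a Hamiltonian cycle in each half. Fourth, and most delicately, I would need to splice these two cycles together through the cut vertices $u,v$ to produce a single Hamiltonian cycle of $G$: each half-cycle either uses the added edge or not, and one must arrange the parities/edge-usage so that the two cycles can be merged into one cycle traversing all of $G$.

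I expect the splicing step to be the main obstacle. The difficulty is that a Hamiltonian cycle in a piece need not use the added edge $uv$ in the ``right'' way to glue cleanly: if both pieces' cycles avoid the glue edge, or both use it, the merge can fail to produce a single cycle (it may yield two disjoint cycles or leave a vertex doubly covered). The resolution is to prove a strengthened inductive statement --- not merely that each piece is Hamiltonian, but that each piece has a Hamiltonian cycle with prescribed behavior at the cut vertices (for instance, a cycle that can be chosen either to use or to avoid the designated edge, or that passes through $u$ and $v$ via specified incident edges). This is the familiar phenomenon that a clean induction on connectivity reduction usually demands strengthening ``Hamiltonian'' to something like ``Hamiltonian with a controllable chord,'' so the correct formulation of this auxiliary property, and verifying it is preserved under the reduction, is where the bulk of the careful argument will lie.
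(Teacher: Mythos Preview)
Your high-level plan---split along a $2$-edge-cut, handle each side separately, and splice---matches the paper's architecture, and you correctly identify the splicing step as the crux. But the specific mechanism you propose has a genuine gap.

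You want to restore cubicity in each piece $A$ by adding an edge $xy$ between the two cut vertices, recurse to get a Hamiltonian cycle in $A+xy$, and then splice. For the splice to work you need that cycle to \emph{use} the edge $xy$, i.e.\ you need a Hamiltonian $xy$-path in $A$. You recognise this and propose strengthening the induction to ``Hamiltonian cycle through a prescribed edge.'' The problem is the base of that induction: when a piece (or $G$ itself) happens to be $3$-connected you fall back on Kardo\v{s}'s theorem, but that theorem gives only Hamiltonicity, not Hamiltonicity through a specified edge. So the strengthened hypothesis cannot be grounded in the $3$-connected case, and your induction does not close. A secondary issue: if $x$ and $y$ are already adjacent in $A$ (equivalently, one of the two boundary $xy$-paths has length~$1$), adding $xy$ produces a multigraph and your construction breaks; you have no argument ruling this out.

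The paper sidesteps both problems by \emph{not} restoring cubicity. It keeps each piece $A$ as a graph with exactly two degree-$2$ vertices $x,y$ and analyses its outer face directly. The face-size bound on the two faces of $G$ containing the cut edges forces the two boundary $xy$-paths inside $A$ to each have length exactly~$2$: length~$1$ is excluded by a separate no-go lemma (Lemma~\ref{l52}, which shows no such graph can have $x,y$ at \emph{adjacent} corners of a boundary quadrilateral---this is exactly the case where your $A+xy$ would be a multigraph), and length~$\geq 3$ is excluded by the size-$6$ bound. Hence $A$ has a quadrilateral outer face with $x,y$ at \emph{opposite} corners. For this very rigid class the paper proves a Hamiltonian $xy$-path lemma (Lemma~\ref{l53}) by a self-contained induction that simply peels off the outer four vertices layer by layer; this induction stays entirely within the ``quadrilateral with opposite degree-$2$ corners'' class and never invokes Kardo\v{s}. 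The $3$-connected result is used only once, at the very top, for the original graph $G$---never for the pieces. That is the structural insight you are missing: the face bound forces the pieces into a shape so constrained that the Hamiltonian-path statement admits a direct proof, with no need to strengthen the global inductive hypothesis or to appeal to the $3$-connected theorem recursively.
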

	
	Meanwhile, Figure~\ref{fcounter} presents a 2-connected, planar, non-Hamiltonian, cubic graph with faces of size at most 7, thereby implying that the 6-face condition of Theorem \ref{t1} is tight.
	
	
		\begin{figure}[h]
		\centering 
		\includegraphics[scale=0.55]{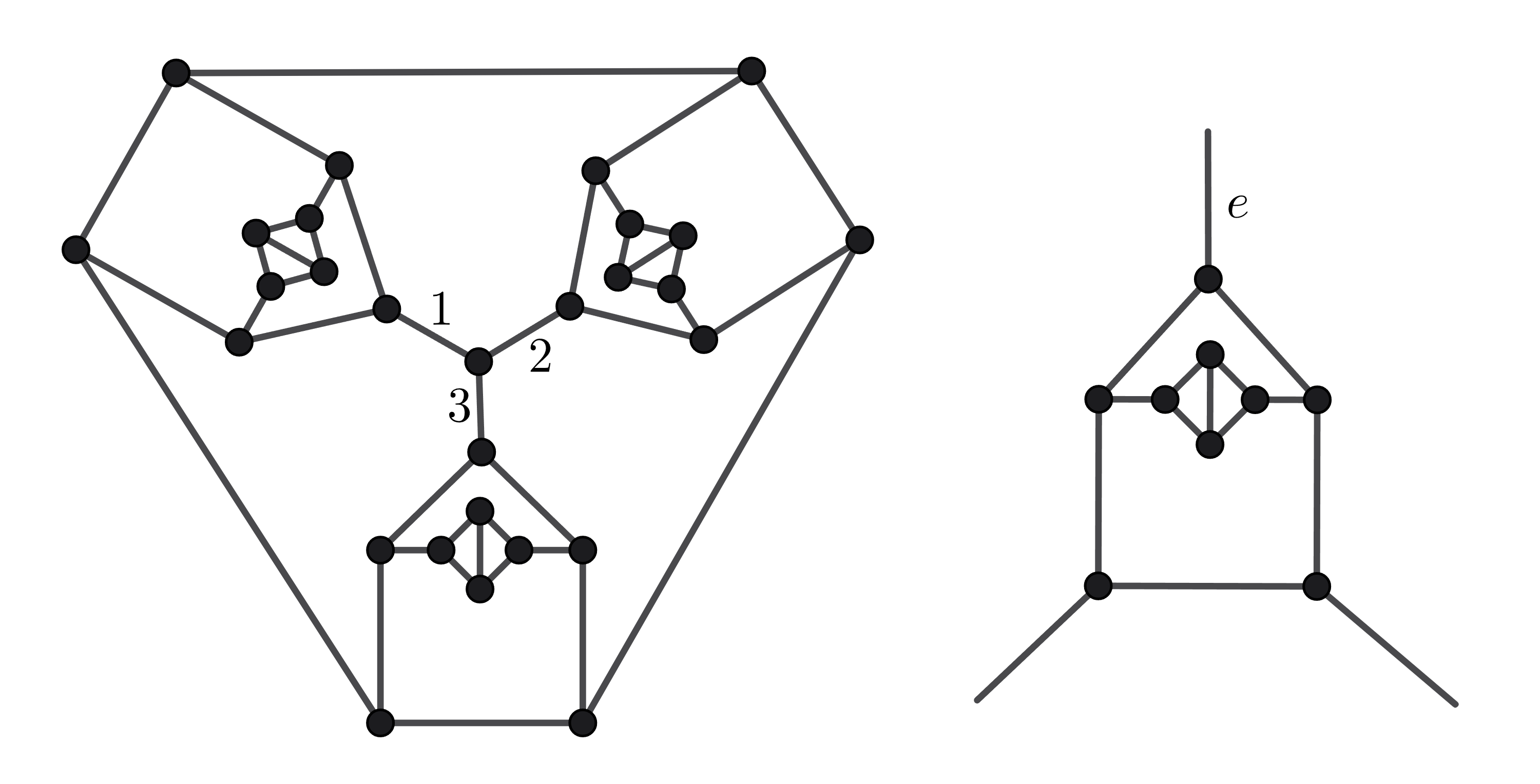}
		\caption{The 6-face condition of Theorem~\ref{t1} is tight: An example. The larger Graph $G$ in the left is a 2-connected, planar, non-Hamiltonian, cubic graph with faces of size at most 7. To see $G$ is indeed non-Hamiltonian, it suffices to use the fact that every Hamiltonian cycle passing through the structure in the right must use the edge $e$. If $G$ is Hamiltonian, then its Hamiltonian cycle must use the edges $1$, $2$, and $3$ in the left plot, which results in a contradiction.}
		\label{fcounter}
	\end{figure}


	The significance and motivation of this work can be explained as follows. For Barnette’s conjecture, every possible way of weakening the condition imposed on the graph, e.g., replacing 3-connectivity with 2-connectivity, or removing the planarity or bipartite condition, renders the conjecture false, and all the decision problems of Hamiltonicity of graphs with the weaker conditions turn out to be NP-complete  \cite{GJT,ANS}. That is, if Barnette's conjecture had been true, 
	then each condition imposed on the graph would be extremely tight. Along a similar research line for the Barnette-Goodey conjecture, although it has already been proved true, it is natural and still meaningful to explore to which extent the imposed condition can be weakened. This work can be regarded as a first attempt in this line. Obviously, there are more directions worth exploring remaining open. For example, whether the Barnette-Goodey conjecture can be strengthened in the face condition,
	or whether Theorem~\ref{t1} can possibly be further strengthened by replacing the planarity condition with something weaker.

%

We use standard notations on graphs throughout the paper. All graphs considered in this paper are simple, undirected and finite.
The rest of the paper is organized as follows. Section~\ref{sec:lemma} provides several lemmas for proving Theorem~\ref{t1}
and Section~\ref{sec:proof} gives the proof.

	\section{Preliminaries and lemmas}
	\label{sec:lemma}
	
	Let $G=(V(G),E(G))$ be a graph. For two non-intersecting subsets of $V(G)$, $V_1,V_2\subseteq V(G), V_1\bigcap V_2=\emptyset$, and an edge subset $S\subseteq E(G)$, we let $G\backslash S=(V(G),E(G)\backslash S)$ and $E(V_1,V_2)$ 
	collect all edges having one endpoint in $V_1$, and the other in $V_2$. For convenience, we use $G-V_1$ to denote the induced subgraph $G[V\backslash V_1]$.  Below we list some facts and lemmas used in proving Theorem~\ref{t1}. 
	

	\setcounter{proposition}{1}
	\begin{proposition} \label{p1}
		(Proposition 2.31 of \cite{OPT}) Let $G$ be a 2-connected graph with a planar embedding $\Phi$.
		Then every face is bounded by a circuit, and every edge is on the boundary of
		exactly two faces. Moreover, the number of faces is $|E(G)| - |V(G)| + 2$. 
	\end{proposition}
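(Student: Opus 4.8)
The plan is to derive all three assertions from two structural features that 2-connectivity supplies: $G$ is connected, and (having at least three vertices) $G$ is bridgeless and has no cut vertex. I would treat the three claims in a convenient order, first recording the counting formula, then the incidence of edges with faces, and finally the structure of face boundaries.

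For the counting formula I would establish the usual Euler identity $|V(G)|-|E(G)|+|F|=2$ for a connected plane graph and rearrange it. The cleanest route is induction on $|E(G)|$. For the base case I take a spanning tree $T$ of $G$, which has $|V(G)|-1$ edges and, being acyclic, bounds a single face, so the identity reads $|V(G)|-(|V(G)|-1)+1=2$. For the inductive step I add the missing edges one at a time; each such edge has both endpoints already present and, by the Jordan curve theorem, lies in a single face that it separates into exactly two, so $|E(G)|$ and $|F|$ grow together and the alternating sum is unchanged. Connectivity, guaranteed by 2-connectivity, is exactly what makes a spanning tree available and keeps the argument valid.

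Next I would show that every edge lies on the boundary of exactly two faces. Each edge of a plane graph is incident to at most two faces, and to only one precisely when both of its sides lie in the same face. I claim this degenerate case occurs if and only if the edge is a bridge. Indeed, a non-bridge edge $e$ lies on some cycle $C$; viewing $C$ as a Jordan curve, the two sides of $e$ lie locally inside and outside $C$, and since no face can cross $C$, the faces meeting the two sides are distinct. Conversely, a bridge has no such cycle and its two sides fall into one region. As a 2-connected graph is bridgeless, every edge borders exactly two faces.

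Finally I would argue that each face is bounded by a circuit. The boundary of a face is a closed walk $W$, and I must rule out repetitions. No edge is traversed twice, since an edge appearing on both sides of one face boundary is a bridge, already excluded. To rule out a repeated vertex, suppose $W$ returns to a vertex $v$ twice; then the cyclic order of edges around $v$ in the embedding splits the remaining boundary into arcs lying in separate regions near $v$, so deleting $v$ would disconnect the parts of $G$ drawn in those regions, making $v$ a cut vertex and contradicting 2-connectivity. Hence $W$ has neither a repeated edge nor a repeated vertex and is a circuit. I expect this last step to be the main obstacle: the earlier parts are routine once connectivity and bridgelessness are available, whereas converting the intuitive ``the boundary pinches at $v$'' picture into a rigorous separation statement requires careful bookkeeping of the rotation system at $v$ together with an appeal to the Jordan curve theorem.
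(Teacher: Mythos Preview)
The paper does not supply its own proof of this proposition; it is quoted as Proposition~2.31 of Korte--Vygen and used as a black box throughout. There is therefore nothing in the paper to compare your argument against.

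That said, your sketch is a correct and essentially standard treatment of the result: build Euler's formula starting from a spanning tree and adding chords one at a time (each new chord splits a face, so $|E|$ and $|F|$ grow together), use bridgelessness---guaranteed by 2-connectivity---to show every edge separates two distinct faces via the Jordan curve theorem, and use the absence of cut vertices to force each face-boundary walk to be a simple cycle. The place you flag as delicate, namely converting a repeated vertex on a boundary walk into a cut vertex by examining the rotation at that vertex, is indeed where the real care is needed; the standard way to make it precise is to observe that the edges incident to $v$ on the boundary of a single face must occupy a consecutive block in the cyclic order at $v$, so two visits to $v$ yield two disjoint blocks whose attached subgraphs are separated by $v$.
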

	
	\setcounter{corollary}{2}
	\begin{corollary} \label{c1}
		For a 2-connected graph $G$ with a planar embedding $\Phi$, if a vertex $x$ of $G$ has degree 3, and $e_1,e_2,e_3$ are the three edges $x$ is incident with, then the three pairs of edges $(e_1,e_2),(e_2,e_3),(e_3,e_1)$ determine three different faces.
	\end{corollary}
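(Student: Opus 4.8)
The plan is to prove Corollary 3 by using Proposition 2 to pin down how the three edges at a degree-3 vertex interact with the faces of the embedding. The key structural input is the second assertion of Proposition 2: in a 2-connected planar graph, every edge lies on the boundary of exactly two faces. So each of the three edges $e_1,e_2,e_3$ incident with $x$ borders exactly two faces, and I will argue about which faces these are by looking at the rotation of edges around $x$.

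First I would fix the cyclic (clockwise) order of the three edges around $x$ in the embedding $\Phi$; without loss of generality this order is $e_1,e_2,e_3$. In a planar embedding, the faces incident to $x$ are exactly the angular sectors between consecutive edges in this rotation, so there are precisely three ``corners'' at $x$, namely the sectors $(e_1,e_2)$, $(e_2,e_3)$, and $(e_3,e_1)$. Each such corner lies in exactly one face, giving a well-defined map from the three consecutive pairs to faces; denote the corresponding faces $f_{12},f_{23},f_{31}$. The substance of the claim is twofold: that each consecutive pair determines a single face (which is immediate from the sector description), and crucially that these three faces are pairwise distinct.

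The main step, and the one I expect to be the real obstacle, is establishing distinctness. Suppose for contradiction that two of the corner faces coincide, say $f_{12}=f_{23}$. By Proposition 2 this common face is bounded by a circuit $C$, and since it occupies both the sector between $e_1,e_2$ and the sector between $e_2,e_3$, its boundary circuit must pass through $x$ twice, using all three edges $e_1,e_2,e_3$ while ``wrapping around'' the edge $e_2$ on both sides. I would make this precise by noting that the edge $e_2$ would then be incident to the face $f_{12}=f_{23}$ on both of its sides, forcing $e_2$ to be a bridge of $G$ (its two incident faces coincide), which contradicts 2-connectivity since a 2-connected graph on at least three vertices has no bridge. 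The same argument applies to any coinciding pair, so $f_{12},f_{23},f_{31}$ are pairwise distinct.

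A subtle point to handle carefully is the degenerate possibility that the boundary circuit $C$ revisits $x$ without $e_2$ being a genuine bridge, or that the graph is too small for 2-connectivity to forbid bridges; I would address this by invoking that a bridge separates the graph, so its removal disconnects $G$, directly contradicting that $G$ is 2-connected (hence bridgeless). Once distinctness is secured, the conclusion that the three pairs $(e_1,e_2),(e_2,e_3),(e_3,e_1)$ determine three different faces follows immediately, completing the proof. The whole argument is short precisely because Proposition 2 already delivers the ``every edge borders exactly two faces'' fact, reducing everything to ruling out bridges.
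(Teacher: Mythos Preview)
Your proof is correct and close in spirit to the paper's, but the two arguments emphasize different halves of the statement. The paper starts from Proposition~\ref{p1} applied to $e_1$: the two faces $F_1,F_2$ bordering $e_1$ have boundary circuits through $x$, and since $\deg(x)=3$ each circuit uses exactly one of $e_2,e_3$ alongside $e_1$; repeating this for $e_2$ gives faces $F_3,F_4$, and then a count (at most three faces can meet a degree-$3$ vertex) forces $F_1=F_4$, so each unordered pair picks out a single face. Distinctness is left implicit in the inequalities $F_1\neq F_2$ and $F_3\neq F_4$ coming from Proposition~\ref{p1}. Your argument instead takes the rotation at $x$ as primitive, defines the three corner faces $f_{12},f_{23},f_{31}$ directly, and then spends its effort on pairwise distinctness via the bridge characterization. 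Both routes ultimately rest on the same content of Proposition~\ref{p1} (every edge of a 2-connected plane graph borders exactly two faces); yours is a bit more explicit about why the three faces are different, the paper's a bit more explicit about why the pair-to-face assignment is well defined. Note that you could shorten your distinctness step by citing Proposition~\ref{p1} directly---if $f_{12}=f_{23}$ then $e_2$ borders only one face, already contradicting Proposition~\ref{p1}---without detouring through the bridge language.
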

	
	\begin{proof}
			\begin{figure}[h]
			\centering
			\includegraphics[scale=0.65]{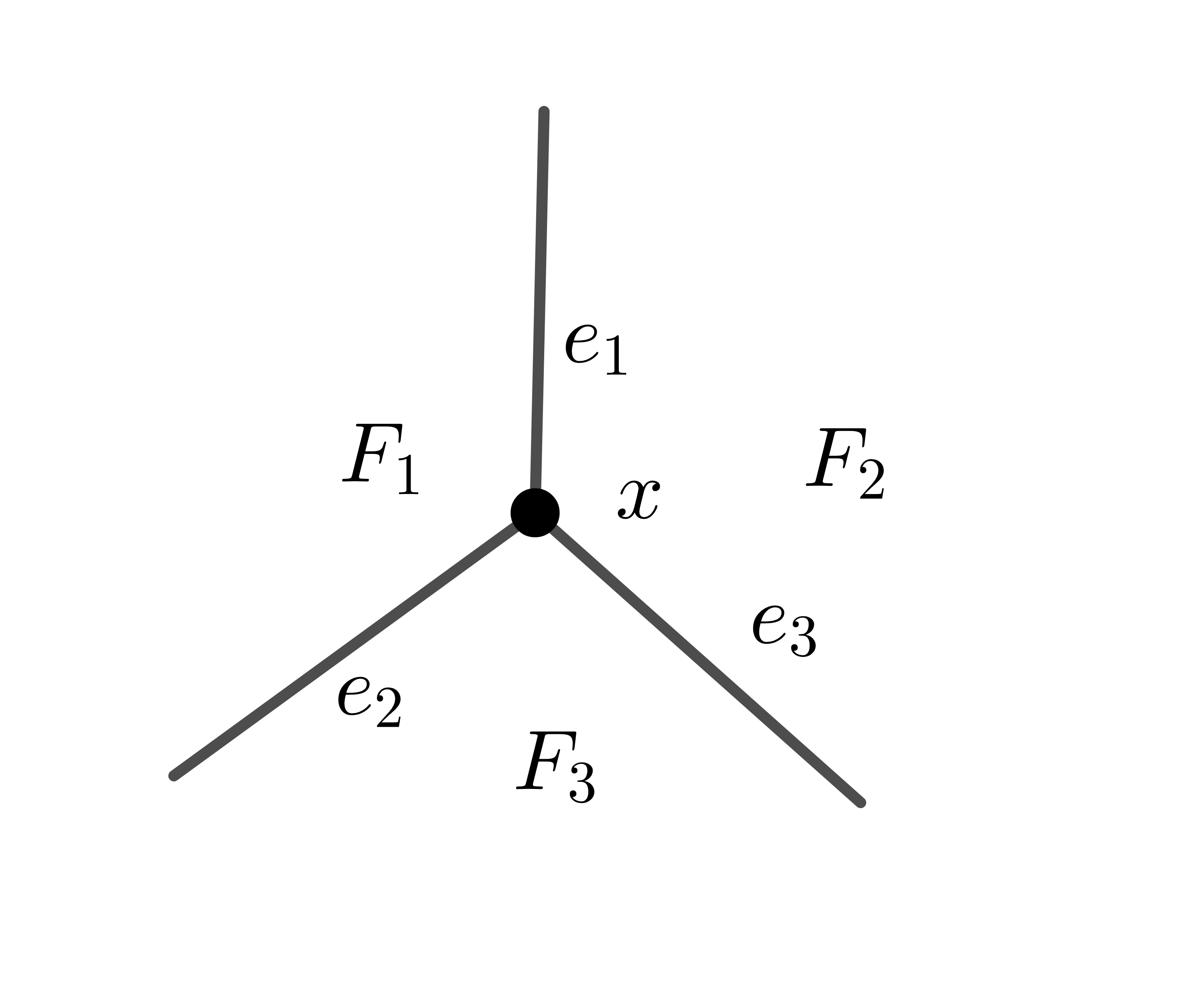}
			\caption{The proof of Corollary~\ref{c1}.}
			\label{f1}
		\end{figure}
	 The edge $e_1$ is on the boundary of two different faces $F_1,F_2$, bounded by different circuits that go through $x$ (see Figure~\ref{f1}). Since $x$ has degree 3, we may assume that $e_1,e_2$ is on the boundary of $F_1$, and $e_1,e_3$ is on the boundary of $F_2$. With the same argument, $e_2$ is on the boundary of two different faces $F_3,F_4$, with $F_3$ having $e_3,e_2$ on its boundary, and $F_4$ having $e_2,e_1$ on its boundary. And $x$ can be incident with at most 3 different faces since its degree is $3$. Thus  $F_1=F_4$ and the result follows.
	\end{proof}

	Given a 2-connected planar graph $G$ and a planar embedding $\Phi$, we know that there is only one face of $\Phi$ that extends to infinity (or equivalently, is unbounded in $\mathbb{R}^2$). We call the unbounded face the \textit{outer face} of $\Phi$.
	For the 3-dimensional sphere $\mathbb{S}_3$ and a point $p$ on it, we know there is a standard continuous bijective projection $f_p: \mathbb{S}_3\backslash \{p\} \mapsto \mathbb{R}^2$. Therefore $f_p^{-1} \circ \Phi$ is an embedding onto $\mathbb{S}_3$, faces of which are in one-to-one correspondence with faces of $\Phi$ (in the sense that the boundary cycles of the corresponding faces are the same), and the pole $p$ in the outer face of $\Phi$ (its counterpart in $\mathbb{S}_3$, to be precise). For any face $F$ of $\Phi$, and its counterpart $f_p^{-1}(F)$ of $f_p^{-1} \circ \Phi$, we can pick a point $q$ in $f_p^{-1}(F)$, and use $f_q$ to project the sphere back to the plane to get another planar embedding $f_q \circ f_p^{-1} \circ \Phi$, the outer face of which is $F$. Therefore we have the following:
	
	\setcounter{proposition}{3}
	\begin{proposition} \label{p2}
		Given a 2-connected planar graph $G$ and a planar embedding $\Phi$,
		for any face $F$ of $\Phi$, there exists a planar embedding $\Phi'$ of $G$ such that the faces of $\Phi$ are in one-to-one correspondence with faces of $\Phi'$ (in the sense that the boundary cycles of the corresponding faces are the same), and $F$ is the outer face of $\Phi'$.
	\end{proposition}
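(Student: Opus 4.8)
The plan is to transport the embedding to the sphere, move the center of projection into the desired face, and project back to the plane — precisely the stereographic-projection construction set up in the paragraph preceding the statement. The conceptual point is that on the sphere $\mathbb{S}_3$ no face is distinguished as ``outer'', so the entire content of the proposition is the freedom to choose which face becomes unbounded upon returning to $\mathbb{R}^2$.

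First I would form the spherical embedding $\Psi := f_p^{-1}\circ\Phi$ of $G$ onto $\mathbb{S}_3$, where $p$ is the center of the fixed stereographic projection $f_p$. Since $f_p^{-1}$ is a homeomorphism of $\mathbb{R}^2$ onto $\mathbb{S}_3\setminus\{p\}$, it carries the plane graph $\Phi(G)$ to the sphere graph $\Psi(G)$, sends the connected components of $\mathbb{R}^2\setminus\Phi(G)$ (the faces of $\Phi$) bijectively to the connected components of $\mathbb{S}_3\setminus\Psi(G)$ (the faces of $\Psi$), and maps the boundary of each face onto the boundary of its image; hence corresponding faces carry identical boundary cycles. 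Under this correspondence the point $p$ lies in the interior of the spherical counterpart of the outer face of $\Phi$.

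Next, given the target face $F$, I would choose a point $q$ in the interior of its spherical counterpart $\Psi(F)$; this set is open and nonempty, and contains no point of $\Psi(G)$, so the projection $f_q:\mathbb{S}_3\setminus\{q\}\to\mathbb{R}^2$ is defined on all of $\Psi(G)$. Setting $\Phi' := f_q\circ\Psi = f_q\circ f_p^{-1}\circ\Phi$ and repeating the homeomorphism argument with $f_q$ in place of $f_p^{-1}$, I obtain a genuine planar embedding of $G$ whose faces biject, with identical boundary cycles, with the faces of $\Psi$ and therefore of $\Phi$. Because $f_q$ sends $q$ ``to infinity'', the unbounded face of $\Phi'$ is exactly the image of the face $\Psi(F)$ containing $q$, namely $F$, which is the required conclusion.

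The only genuinely delicate step, and the main obstacle, is justifying that a homeomorphism between these two surfaces carries faces to faces with matching boundary cycles. This rests on two inputs: that stereographic projection is a homeomorphism $\mathbb{S}_3\setminus\{\text{pole}\}\cong\mathbb{R}^2$, and the topological fact that a homeomorphism maps connected components of a complement onto connected components of the image complement and preserves topological boundaries. Everything else is bookkeeping; Proposition~\ref{p1} ensures each such boundary is a circuit, so the term ``boundary cycle'' is well defined on both sides of the correspondence.
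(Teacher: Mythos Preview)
Your argument is correct and is essentially identical to the paper's own justification: lift $\Phi$ to the sphere via $f_p^{-1}$, choose a point $q$ inside the spherical copy of $F$, and re-project with $f_q$ to obtain $\Phi'=f_q\circ f_p^{-1}\circ\Phi$ with $F$ as the outer face. You supply more topological detail about why homeomorphisms preserve faces and boundary cycles than the paper does, but the method is the same.
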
 
	
	By Proposition \ref{p2}, when considering a 2-connected planar graph $G$ and an embedding $\Phi$, we can designate a face $F$ of $\Phi$ to be the outer face for one time. This is actually a slight abuse of notation. 
	
	Hereafter when we consider a planar graph $G$, it is always assumed that $G$ is accompanied with a planar embedding $\Phi$. 
	
	\begin{proposition} \label{p3}
		Let $G$ be a 2-connected planar graph. Suppose $C$ is a cycle in $G$ and $V_1$ collects the vertices in the exterior of $C$ ($C$ excluded). Then the induced graph $H_1=G- V_1$ is a 2-connected planar graph, too.
	\end{proposition}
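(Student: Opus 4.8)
The planarity of $H_1$ is immediate, since $H_1$ is a subgraph of the planar graph $G$ and inherits a planar embedding from $\Phi$; so the real content is 2-connectivity. The plan is to verify the criterion that $H_1$ is 2-connected provided $|V(H_1)|\ge 3$, $H_1$ is connected, and $H_1-x$ is connected for every vertex $x$. The first of these is clear because $C$ is a cycle, so $|V(C)|\ge 3$. For the other two I will run a single rerouting argument, applied in $G-x$ to handle $H_1-x$ and in $G$ itself (deleting no vertex) to handle connectivity.

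The geometric engine is the Jordan curve theorem. In the embedding $\Phi$ the cycle $C$ is a closed curve splitting $\mathbb{R}^2$ into a bounded interior and an unbounded exterior; write $V_2$ for the vertices drawn strictly inside, so that $V(H_1)=V(C)\cup V_2$ while $V_1$ are the vertices drawn strictly outside. The first fact I would establish is that no edge of $G$ joins a vertex of $V_2$ to a vertex of $V_1$: such an edge would be a curve running from the interior to the exterior of the Jordan curve $C$ and would therefore have to meet $C$, contradicting that edges of a plane graph do not cross. It follows that every edge of $G$ is drawn either entirely inside the closed region $\bar D$ bounded by $C$ or entirely outside it; call these \emph{interior} and \emph{exterior} edges. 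In particular an edge incident to a $V_2$-vertex is necessarily interior, and an edge incident to a $V_1$-vertex is necessarily exterior.

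The main step is the rerouting. Fix any $x\in V(H_1)$ and any two vertices $a,b\in V(H_1)\setminus\{x\}$; I must join them inside $H_1-x$. Since $G$ is 2-connected, $G-x$ is connected, so there is an $a$--$b$ path $Q$ in $G-x$. Decompose $Q$ into maximal runs of interior edges and maximal runs of exterior edges. The crucial observation is that any vertex incident to both an interior and an exterior edge must lie on $C$, because a $V_2$-vertex carries only interior edges and a $V_1$-vertex only exterior ones; hence every maximal exterior run of $Q$ has both endpoints on $C$. I then replace each exterior run, say joining $C$-vertices $u$ and $u'$, by an arc of $C$ from $u$ to $u'$ that avoids $x$: such an arc exists because $C$ is a cycle and $u,u',x$ are distinct (no vertex of $Q$ equals $x$), so at least one of the two arcs of $C$ between $u$ and $u'$ misses $x$. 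Leaving the interior runs untouched, the result is an $a$--$b$ walk using only interior edges and edges of $C$, all of which lie in $H_1-x$. This proves $H_1-x$ connected for every $x$; running the identical argument in $G$ with no vertex deleted proves $H_1$ connected, and 2-connectivity follows.

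The step I expect to be the main obstacle is the careful bookkeeping of edges drawn in the exterior of $C$ whose \emph{both} endpoints lie on $C$ (exterior chords): these edges belong to the induced graph $H_1$ yet are not drawn inside $\bar D$, so one must resist identifying $H_1$ with ``$C$ together with its interior''. Classifying such chords as exterior edges is precisely what makes the junction observation — that every interior/exterior transition of $Q$ occurs at a $C$-vertex — valid, and this observation is the linchpin of the whole argument. Checking it cleanly, together with the existence of an $x$-avoiding arc of $C$, is where the genuine care is needed; once these are in place the rerouting itself is routine.
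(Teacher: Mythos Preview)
Your proof is correct and follows essentially the same rerouting argument as the paper: take a path in $G-x$ between the two given vertices and replace its exterior excursions by arcs of $C$ that avoid $x$. The paper's version is terser (it replaces everything between the first exit point $w_1$ and the last re-entry point $w_2$ by a single arc of $C-\{x\}$, and asserts connectivity of $H_1$ as ``obvious''), whereas you replace each maximal exterior run separately and are more explicit about the Jordan-curve justification and the subtlety of exterior chords, but the core idea is identical.
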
 
	\begin{proof}
		Obviously $H_1$ is planar and connected. $\forall x\in V(H_1)$, we are going to prove $H_1-\{x\}$ is connected. That is, we need to find a walk connecting $y$ and $z$ for all vertex pairs of $y, z\in H_1-\{x\}, y\neq z$. Since $G$ is 2-connected, there is a walk $P$ connecting $y,z$ in $G-\{x\}$. If $P$ is contained in the interior of $C$ ($C$ included), we are done. Otherwise suppose $P$ starts from $y$ and ends at $z$. Assume that the first time $P$ goes out from the interior of $C$ is through vertex $w_1\in V(C)$, and the last time $P$ goes into the interior of $C$ is through vertex $w_2\in V(C)$. We modify the part between $w_1,w_2$ to get a walk $P'$ contained in the interior of $C$. If $w_1=w_2$, simply remove the part between $w_1$ and $w_2$; otherwise there must be a walk $P_1$ connecting $w_1,w_2$ in the graph $C- \{x\}$ for $C$ is 2-connected, and thus replacing the part in $P$ between $w_1$ and $w_2$ with $P_1$ gives $P'$.
	\end{proof}
	\setcounter{lemma}{5}
	\begin{lemma} \label{l51}
		Let $G$ be a 2-connected cubic graph. If $G$ is not 3-connected, then $G$ must have a 2-edge-cut $\{e, f\}$. Moreover, $\{e,f\}$ is a matching and $G\backslash \{e, f\}$ has exactly two connected components, each of which shares exactly two vertices with the four endpoints of $e, f$. 
	\end{lemma}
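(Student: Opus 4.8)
The plan is to start from a 2-vertex-cut, extract a 2-edge-cut from it, and then read off the remaining structure essentially for free. Since $G$ is 2-connected its vertex connectivity is at least $2$, and since it is not 3-connected the vertex connectivity is exactly $2$; hence there is a 2-vertex-cut $\{u,v\}$. Let $C_1,\dots,C_k$ (with $k\ge 2$) be the components of $G-\{u,v\}$. First I would record two easy facts. Every $C_i$ is adjacent to both $u$ and $v$, since otherwise the single vertex $u$ or $v$ would already separate $C_i$ from the rest of $G$, making it a cut vertex and contradicting 2-connectivity. Writing $p_i$ and $q_i$ for the numbers of edges joining $C_i$ to $u$ and to $v$ respectively, and $\epsilon=1$ if $u\sim v$ and $\epsilon=0$ otherwise, counting the edges at $u$ and at $v$ gives $\sum_i p_i=\sum_i q_i=3-\epsilon$; in particular each $p_i,q_i\ge 1$ and $k\le 3$.

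The heart of the argument is producing an edge cut of size exactly two, and I would do this by counting the boundary of a carefully chosen vertex set $X$. For $X=C_i$ the boundary has $p_i+q_i$ edges, while a direct count gives that $X=C_i\cup\{u\}$ has boundary size $3-p_i+q_i$ and $X=C_i\cup\{v\}$ has boundary size $3-q_i+p_i$. A clean dichotomy then closes the step. If every component satisfies $p_i=q_i$, then $\sum_i p_i\le 3$ together with $k\ge 2$ forces some component with $p_i=q_i=1$, so $X=C_i$ has boundary size $2$. Otherwise some $C_j$ has $p_j\ne q_j$, and taking $X=C_j\cup\{u\}$ when $p_j>q_j$ (or $X=C_j\cup\{v\}$ when $q_j>p_j$) yields boundary size at most $2$. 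Because $G$ is bridgeless (its edge connectivity is at least its vertex connectivity $2$), this boundary cannot have size $1$, so it has size exactly $2$; moreover in every case both sides of the partition are connected by construction (using that $u$ and $v$ are each adjacent to all components), so deleting the two boundary edges leaves exactly two connected components. This produces the required 2-edge-cut $\{e,f\}$.

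It then remains to verify the three structural claims, all of which I expect to follow quickly. To see $\{e,f\}$ is a matching, suppose $e$ and $f$ shared a vertex $x$; then every edge of the cut is incident to $x$, and since $x$ has a third, uncut edge keeping a neighbor on its own side, deleting $x$ would disconnect the two sides, making $x$ a cut vertex and contradicting 2-connectivity. Hence the four endpoints of $e,f$ are distinct. Since the cut splits $G\backslash\{e,f\}$ into exactly two components $H_1,H_2$, and each of $e,f$ is a genuine cut edge with one endpoint in $H_1$ and one in $H_2$, each of $H_1,H_2$ contains exactly two of the four endpoints, which is the last assertion.

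I expect the main obstacle to be the existence step, namely extracting a 2-edge-cut from the 2-vertex-cut, because a single component may send as many as four edges to $\{u,v\}$, so merely deleting one component need not produce a small cut. The resolution is the regrouping described above: when a component sends unequal numbers of edges to $u$ and to $v$, one moves the busier of $u,v$ onto that component's side of the partition, which drops the boundary to exactly two edges. Once the minimal 2-edge-cut is in hand, the matching property and the two-by-two split of the endpoints are formal consequences of working in a 2-connected graph.
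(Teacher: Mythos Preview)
Your proposal is correct and follows essentially the same route as the paper: start from a 2-vertex-cut $\{u,v\}$, use the degree-$3$ constraint at $u$ and $v$ to locate a side whose boundary has exactly two edges (possibly after moving $u$ or $v$ onto that side), and then derive the matching property and the two-components/two-endpoints structure from 2-connectivity. Your bookkeeping with the parameters $p_i,q_i$ is a bit more systematic than the paper's ad hoc case split on $|E(\{x\},A)|$ and $|E(\{y\},A)|$, but the underlying idea---in particular the regrouping $X=C_j\cup\{u\}$ when $p_j>q_j$---is exactly the paper's second case.
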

	\begin{proof}
	Since $G$ has at least four different vertices and is not 3-connected, there are two vertices $x, y$ whose removal leaves the remaining graph disconnected. Pick two different connected components $A,B$ of $G-\{x,y\}$ (we have not yet excluded the possibility that there are more than two connected components in $G-\{x,y\}$). Since $G$ is 2-connected, there must be at least one edge between $x$ and $A$, otherwise the removal of $y$ leaves the graph disconnected. The same argument goes for $y$ and $A$ as well as $x, y$ and $B$. Since the degree of $x$ is $3$, we have either $|E(\{x\},A)|=1$ or $|E(\{x\},B)|=1$. Without loss of generality, we assume $|E(\{x\},A)|=1$, and denote $E(\{x\},A)=\{e\}$. If $|E(\{y\},A)|=1$, assuming $E(\{y\},A)=\{f\}$, then $\{e,f\}$ is a 2-edge-cut (see the left plot of Figure~\ref{f51}). Otherwise, there are two edges between $y$ and $A$ and one edge between $y$ and $B$, denoted by $f$, for $y$ has degree $3$,
	and thus $x$ and $y$ must be not adjacent, thereby implying $\{e,f\}$ is a 2-edge-cut (see the middle plot of Figure~\ref{f51}).  
	
	Now suppose a 2-edge-cut $\{e,f\}$ is not a matching and shares a vertex $z$. Since $z$ has degree $3$, it is incident with an edge $(z, r)$ other than $e, f$, and thus $G-\{z\}$ is disconnected, yielding a contradiction to the 2-connectivity of $G$ (see the right plot of Figure~\ref{f51}).  Let $e=xu, f=yv$. 
The connectivity of $G$ implies that any connected component in $G\backslash \{e,f\}$ must share at least one vertex with $\{x, y, u, v\}$. Suppose one of such components, say $A$, share only one vertex with $\{x, y, u, v\}$, say $x$. Then $G-\{u\}$ is disconnected, a contradiction to 2-connectivity of $G$. Therefore every connected component in $G\backslash \{e,f\}$ must share at least two vertices with $\{x,y,u,v\}$. As a result, there must be exactly two components, each of which shares exactly two vertices with $\{x,y,u,v\}$.
		\begin{figure}[h] 
			\centering
			\includegraphics[scale=0.6]{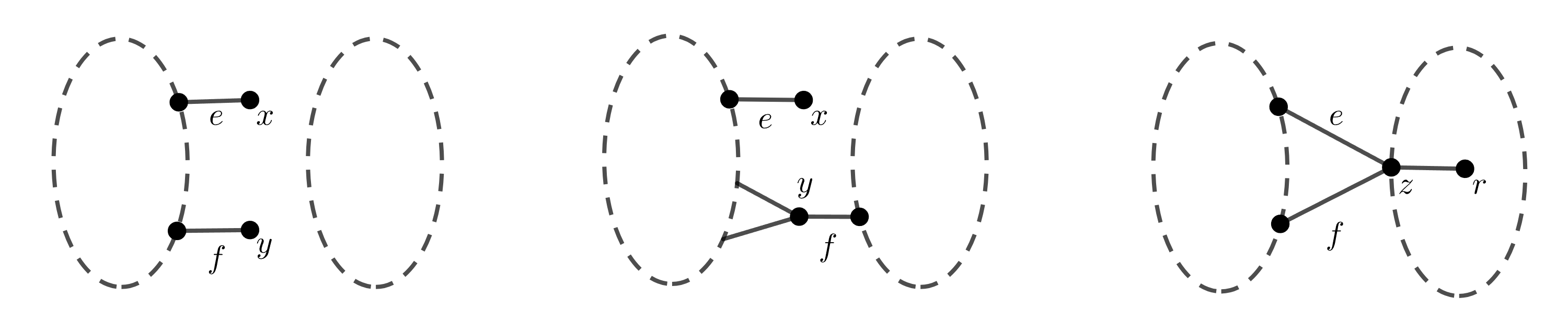} 
			\caption{The proof of Lemma~\ref{l51}.}
			\label{f51}
		\end{figure}
	\end{proof}
	
	\begin{lemma} \label{l52}
		There is no graph $G$ satisfying the following properties: 
		(1) $G$ is 2-connected planar with faces of size no more than 6;
		(2) The outer face of $G$ is a quadrilateral consisting of four vertices, say $xyzw$, in cyclic order, and $x, y$ have degree 2;
		(3) Each vertex in $V(G)\backslash \{x,y\}$ has degree 3.
	\end{lemma}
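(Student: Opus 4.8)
The plan is to argue by contradiction, choosing a counterexample $G$ that minimizes $|V(G)|$ and then producing a strictly smaller counterexample, i.e. a minimal-counterexample / infinite-descent argument. First I would pin down the local picture at the two degree-$2$ vertices. Since the outer face is the quadrilateral $xyzw$ in cyclic order and $x,y$ have degree $2$, the only edges at $x$ are $xy,wx$ and the only edges at $y$ are $xy,yz$, so all four edges $xy,yz,zw,wx$ lie on the outer boundary cycle. By Proposition~\ref{p1} the edge $xy$ lies on exactly two faces: one is the outer quadrilateral, and I would call the other one $F$. Applying Proposition~\ref{p1} again at $x$ and at $y$ shows that the second (inner) face incident to $x$ and the second face incident to $y$ must both be $F$, so $F$ is a single inner face whose boundary cycle contains the length-$3$ path $w-x-y-z$.

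Next I would determine $F$ completely. Its boundary is a cycle (Proposition~\ref{p1}) of size at most $6$ containing $w-x-y-z$, so it closes up through a path $R$ from $z$ back to $w$ that avoids $x,y$ and the outer edge $zw$; since $G$ is simple, $R$ has length $2$ or $3$. Writing $z'$ and $w'$ for the inner neighbours of the degree-$3$ vertices $z$ and $w$, Corollary~\ref{c1} forces $R$ to begin with $zz'$ and to end with $w'w$, since the outer face is the pair $(yz,zw)$ at $z$ and the pair $(zw,wx)$ at $w$, so $F$ must be the complementary inner wedge at each. The case $|R|=2$, where $z'=w'=:u$, is the key step to rule out: there $F$ would be the pentagon $wxyzu$, while the face on the inner side of the outer edge $zw$ would be the triangle $zwu$, and these two \emph{distinct} faces would both meet the degree-$3$ vertex $u$ along the same pair of edges $uz,uw$. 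This contradicts Corollary~\ref{c1}, which asserts that the pair $(uz,uw)$ determines a unique face. Hence $|R|=3$, the face $F$ is the hexagon $w-x-y-z-z'-w'-w$ with $z'\neq w'$, and $Q:=zz'w'w$ is a genuine $4$-cycle of $G$.

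Finally I would perform the reduction $G':=G-\{x,y\}$. The cycle $Q$ separates the plane so that its exterior (the unbounded side, carrying the old outer face) contains exactly the vertices $x,y$, every other vertex lying on $Q$ or in its interior; Proposition~\ref{p3} then gives that $G'$ is $2$-connected and planar. Deleting $x,y$ lowers the degrees of $z,w$ from $3$ to $2$, leaves every other degree unchanged, and merges the outer quadrilateral and $F$ into a new outer face bounded precisely by $Q$, while all remaining faces are untouched and hence still have size at most $6$. Thus $G'$ again satisfies (1)--(3), with outer quadrilateral $zww'z'$ and adjacent degree-$2$ vertices $z,w$, yet $|V(G')|=|V(G)|-2$, contradicting the minimality of $G$. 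I expect the two genuinely delicate points to be the face bookkeeping that pins $F$ down as the shared hexagon (in particular the Corollary~\ref{c1} exclusion of the pentagon case), and the verification that $x,y$ are \emph{exactly} the exterior vertices of $Q$, which is what makes Proposition~\ref{p3} applicable and keeps the reduced graph in the same class.
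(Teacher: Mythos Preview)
Your proposal is correct and follows essentially the same minimal-counterexample strategy as the paper: locate the inner face $F$ through $w\!-\!x\!-\!y\!-\!z$, bound the length of the closing path $R$ from $z$ to $w$, and in the hexagon case delete $\{x,y\}$ and invoke Proposition~\ref{p3} to contradict minimality. The only noteworthy difference is in the pentagon case $|R|=2$: you obtain the contradiction directly from Corollary~\ref{c1} at the apex vertex (two distinct faces sharing the same edge-pair there), whereas the paper instead argues that the third neighbour of that vertex makes it a cut-vertex---both routes work and yours is arguably the cleaner of the two.
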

			\begin{figure}[h] 
			\centering
			\includegraphics[scale=0.8]{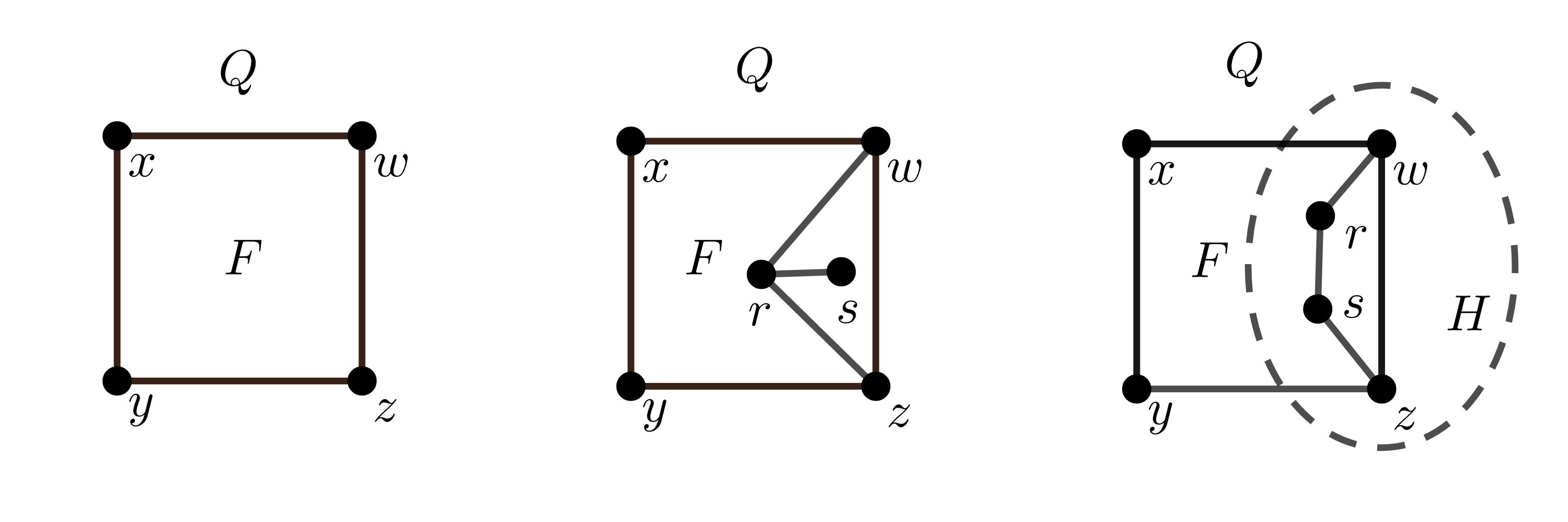} 
			\caption{The proof of Lemma~\ref{l52}.}
			\label{f52}
		\end{figure}
	\begin{proof}
	Suppose for contradiction that there exists at least one graph satisfying all above-mentioned properties, and let $G$ being such a graph with the smallest number of vertices. By Proposition \ref{p1}, the edge $xy$ is incident to two faces: one is the outer quadrilateral $Q$, and the other we denote by $F$ (see the left plot of Figure~\ref{f52}). Since both $x$ and $y$ have degree 2, the boundary of $F$ must contain edges $xy, yz, xw$. After removing $xy, yz, xw$, the boundary of $F$ is then a $zw$-path, denoted by $P$. Since $F$ has size at most 6, $P$ has at most 3 edges. Below we proceed in three cases.

	$-$ $P$ has one edge, i.e., $P=zw$. Then, the edges $zw, zy$ determine two different faces $F, Q$, while Corollary~\ref{c1} requires that $zw, zy$ share only one face, a contradiction; 
	

	
	$-$ $P$ has two edges, i.e., $P=zrw$ (see the middle plot of Figure~\ref{f52}). Since $r$ has degree 3, it is incident with a vertex $s$ other than $z, w$, but then $G-\{r\}$ is disconnected (or otherwise the degree restriction must be violated), a contradiction to the 2-connectivity of $G$; 
	
	$-$ $P$ has three edges, i.e., $P=zsrw$ (see the right plot of Figure~\ref{f52}). Consider the graph $H=G-\{x,y\}$, which can also be obtained by removing the vertices in the exterior of the cycle $zsrwz$. Accordingly, $H$ is planar with faces of size at most 6 and 2-connected (by Proposition~\ref{p3}). Moreover the outer face of $H$ is the quadrilateral $zsrw$, with $z,w$ having degree 2, and all vertices in $V(H)\backslash\{z,w\}$ have degree 3.  That is, $H$ satisfies the properties (1)-(3), but has fewer vertices than $G$,  a contradiction to the minimality of $G$.
	\end{proof}
	
	\begin{lemma} \label{l53}
		Suppose $G$ is a graph satisfying (1) $G$ is 2-connected planar with faces of size at most 6; (2) The outer face of $G$ is a quadrilateral consisting of four vertices, say $xyzw$, in cyclic order, and $x, z$ have degree 2; and (3) Each vertex in $V(G)\backslash \{x, z\}$ has degree 3. Then $G$ has a Hamiltonian path with $x$ and $z$ being its endpoints.
	\end{lemma}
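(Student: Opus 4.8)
The plan is to reduce the existence of the desired Hamiltonian path to the Hamiltonicity of an auxiliary \emph{cubic} graph. Since $x$ and $z$ are non-adjacent (each has degree $2$, with neighbours among $\{y,w\}$) and both lie on the outer quadrilateral $xyzw$, I would add a new edge $xz$ drawn through the outer face, which splits the outer quadrilateral into two triangular faces $xyz$ and $xzw$. Call the resulting graph $G'$. Then $G'$ is planar, $2$-connected (adding an edge cannot lower connectivity), cubic (the only degree-$2$ vertices $x,z$ now have degree $3$), and every face still has size at most $6$ (the two new faces are triangles, all others are inherited from $G$). The point of the construction is that a Hamiltonian cycle of $G'$ \emph{that uses the edge} $xz$ restricts, upon deleting $xz$, to a Hamiltonian path of $G$ with endpoints $x$ and $z$. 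Thus it suffices to produce a Hamiltonian cycle of $G'$ through $xz$.

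Next I would record a forcing statement: if $|V(G)|>4$, then \emph{every} Hamiltonian cycle $H$ of $G'$ uses $xz$. Indeed, were $xz\notin H$, then $H$ would be a Hamiltonian cycle of $G$, and since $x$ and $z$ have degree $2$ in $G$, the cycle $H$ would be forced to use both of $xy,xw$ and both of $zy,zw$; this in turn uses up the two quadrilateral edges at $y$ and at $w$, so $H$ would contain the $4$-cycle $xyzw$ and hence equal it, forcing $|V(G)|=4$, a contradiction. The base case $|V(G)|=4$ is then handled by hand: the degree constraints force $G=K_4-xz$, whose Hamiltonian $x$--$z$ path is $x\,y\,w\,z$.

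The argument then proceeds by strong induction on $|V(G)|$, splitting on the connectivity of $G'$. If $G'$ is $3$-connected, the (now proven) Barnette--Goodey theorem \cite{kardos} supplies a Hamiltonian cycle of $G'$, which by the forcing statement uses $xz$ (as $|V(G)|>4$), and deleting $xz$ finishes the case. If $G'$ is not $3$-connected, then by Lemma~\ref{l51} it has a $2$-edge-cut $\{e,f\}$ that is a matching and splits $G'$ into exactly two components, each meeting the four cut-endpoints in exactly two vertices. One first checks that $xz$ is not a cut edge (otherwise $G=G'\setminus\{xz\}$ would contain a bridge, contradicting its $2$-connectivity), so $x$ and $z$ lie in the same component. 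A Hamiltonian cycle through $xz$ must traverse both $e$ and $f$, hence decomposes into a Hamiltonian path in each component joining that side's two cut-endpoints, the two paths being glued along $e$ and $f$; the path in the component containing $xz$ must additionally use $xz$. I would treat each side as a strictly smaller instance of the lemma, obtained after re-choosing its outer face via Proposition~\ref{p2}, and apply the induction hypothesis to produce the two sub-paths, then splice them.

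The main obstacle is this last case, and specifically the bookkeeping that certifies each side of the $2$-edge-cut as a legitimate smaller instance. One must locate the quadrilateral $xyzw$ relative to the cut and verify, using Corollary~\ref{c1} together with the face-size bound to control the relevant boundary walk, that each side has a quadrilateral outer face whose two cut-endpoints are \emph{opposite} degree-$2$ corners (so that Lemma~\ref{l53} applies inductively). Crucially, the complementary possibility—that the two cut-endpoints would be \emph{adjacent} degree-$2$ corners of a quadrilateral face—is exactly the configuration forbidden by Lemma~\ref{l52}, which therefore rules out such a cut rather than requiring a separate construction. The remaining delicate checks are that the spliced cycle actually passes through $xz$ and visits every vertex exactly once; these require care about how $x,y,z,w$ are distributed between the two components, and constitute the technical heart of the proof.
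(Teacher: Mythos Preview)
Your approach is sound in outline but takes a genuinely different—and heavier—route than the paper's. The paper proves Lemma~\ref{l53} by a direct, self-contained induction that \emph{peels off the outer four vertices}: using Corollary~\ref{c1} and the face-size bound, one shows that the two inner faces incident to $xy$ and $yz$ force a new quadrilateral $rusv$ immediately inside $xyzw$, where $r,s$ are the unique inner neighbours of $y,w$; deleting $\{x,y,z,w\}$ leaves a strictly smaller instance $H$ of the same lemma (via Proposition~\ref{p3}), and the inductive Hamiltonian $r$--$s$ path $P_H$ in $H$ extends to $x\,y\,r\,P_H\,s\,w\,z$ in $G$. No appeal to Barnette--Goodey, to Lemma~\ref{l51}, or to Lemma~\ref{l52} is made.

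By contrast, you add the chord $xz$ to form a cubic $G'$, invoke \cite{kardos} in the $3$-connected case, and in the non-$3$-connected case essentially replicate the entire proof of Theorem~\ref{t1} (the $P_1,\dots,P_4$ analysis plus Lemma~\ref{l52} to exclude adjacent degree-$2$ corners) inside the inductive step. This works—your forcing observation that every Hamiltonian cycle of $G'$ must use $xz$ once $|V(G)|>4$ is exactly what prevents circularity, since you need not track $xz$ during the splicing, only at the end—but it imports a computer-assisted theorem into a lemma the paper keeps entirely elementary, and duplicates the main theorem's cut analysis. The trade-off: your reduction is conceptually uniform (``make cubic, then decompose along a $2$-edge-cut or cite Barnette--Goodey''), while the paper's layer-peeling is shorter, yields an explicit description of the Hamiltonian path, and keeps Lemma~\ref{l53} logically prior to the machinery of Section~\ref{sec:proof}.
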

			\begin{figure}[h] 
			\centering
			\includegraphics[scale=0.8]{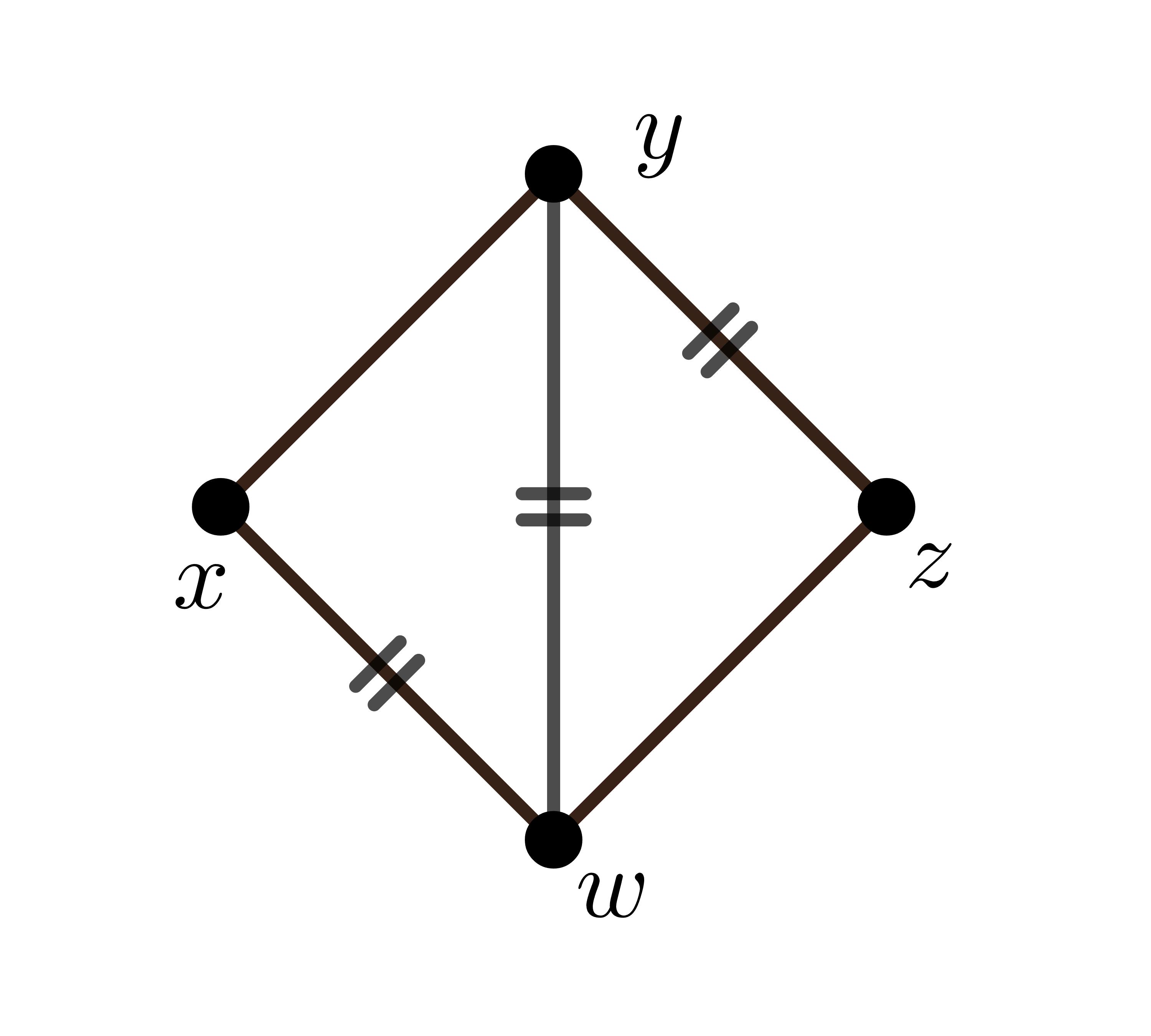}
			\caption{The proof of Lemma~\ref{l53}: The smallest graph $G$ and its Hamiltonian path marked with double dash.}
			\label{f531}
		\end{figure} 
	\begin{proof}
		We prove by induction on $|V(G)|$, the number of vertices of the underlying graph $G$. It can be readily verified that Figure~\ref{f531} shows the smallest graph satisfying the properties (1)-(3) with $|V(G)|=4$, which is unique under graph isomorphism, and the required Hamiltonian path is marked with double dash. Now suppose any graph $G$ satisfying the properties (1)-(3) and $|V(G)|\leq k (k\geq 4)$ has a Hamiltonian path satisfying the requirement, and consider a graph $G_1$ satisfying (1)-(3) and $|V(G_1)|=k+1$ (if there is no such graph $G_1$, then it is trivial to extend the result to $k+1$, so we may always assume there exists at least one such $G_1$). We are going to prove $G_1$ has a Hamiltonian path meeting the requirement. 

		\begin{figure}[h] 
			\centering
			\includegraphics[scale=0.8]{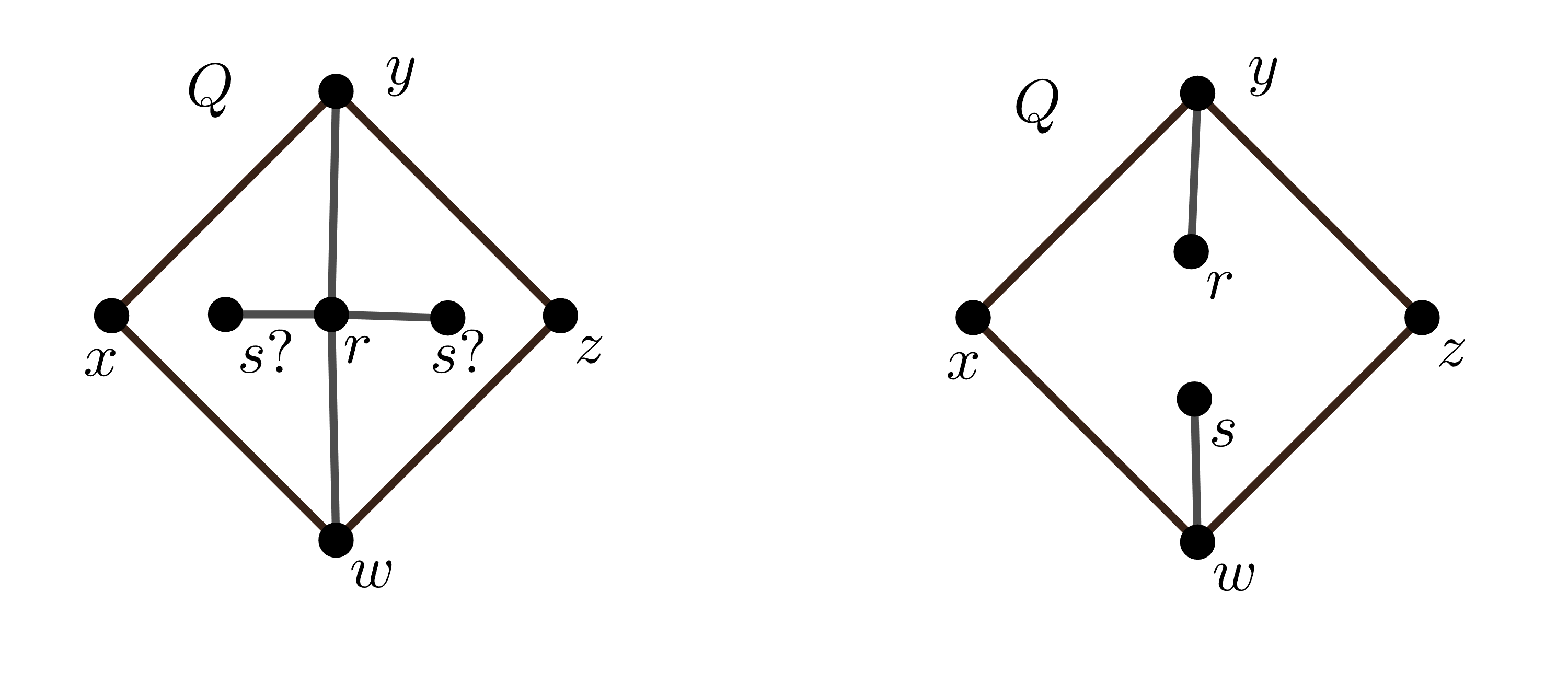}
			\caption{The proof of Lemma~\ref{l53}: Proving that $y,w$ are not adjacent, and the neighbors of $y$ and $w$ other than $x,z$ are different.}
			\label{f532}
		\end{figure}
		
		Assume that the outer face of $G_1$ is $Q=xyzw$ (vertices in cyclic order), and the vertices $x, z$ are of degree 2. If $y$ is adjacent to $w$, then there can be no more vertices other than $x,y,z,w$ due to the restriction on degree. So $|V(G_1)|=4<k+1$, a contradiction. Accordingly, suppose $y$ is adjacent to vertex $r$ other than $x,z$. If $w$ is adjacent to $r$, then $r$ has a neighbor $s$ other than $y,w$ and  no path connects $s$ and Q. But then $G_1-\{r\}$ is disconnected, or otherwise the degree restriction must be violated (see the left plot of Figure~\ref{f532}). This is a contradiction to the 2-connectivity of $G$. Hence $w$ has a neighbor $s$ other than $x,z$ and $s\neq r$ (see the right plot of Figure~\ref{f532}).
		
		\begin{figure}[h] 
			\centering
			\includegraphics[scale=0.67]{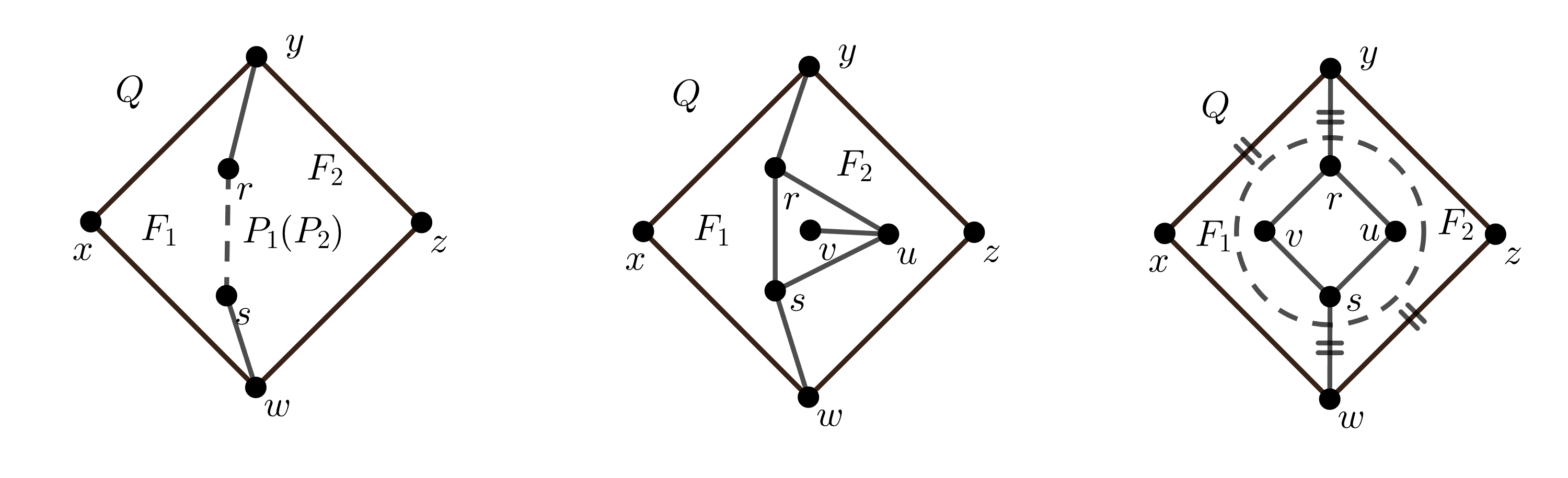}
			\caption{The proof of Lemma~\ref{l53}: Constructing a Hamiltonian path with $x, z$ being its endpoints.}
			\label{f534}
		\end{figure}
		
		 By Proposition \ref{p1}, the edge $xy$ is incident to two faces: one is the outer quadrilateral $Q$ and the other we denote by $F_1$, while the edge $yz$ is also incident to two faces: one is also $Q$ and the other we denote by $F_2$. By Corollary \ref{c1} the edges $yr,ws$ are on the boundary of both $F_1$ and $F_2$. The boundary of $F_1$, with edges $wx,xy,yr,ws$, removed, is a $rs$-path $P_1$. Similarly, the boundary of $F_2$, with edges $wz,zy,yr,ws$ removed, is a $rs$-path $P_2$ (see the left plot of Figure~\ref{f534}). If $P_1=P_2$, then the edge $yr$, and the edge in $P_1$ having $r$ as an endpoint determine two faces $F_1,F_2$, a contradiction to Corollary~\ref{c1}. Therefore $P_1\neq P_2$. Since $F_1,F_2$ are of size at most 6, $P_1$ and $P_2$ are of length at most 2. Suppose one of them, say $P_1$, is of length 1, i.e., $P_1=rs$, and then $P_2$ is of length 2. Let $P_2=rus$. Then $u$ has a neighbor $v$ other than $r,s$. Therefore $G_1-\{u\}$ is disconnected, a contradiction to the 2-connectivity of $G$, or otherwise the degree restriction must be violated (see the middle plot of Figure~\ref{f534}). So $P_1,P_2$ are both of length 2. Assume $P_1=rvs$, $P_2=rus$. Consider the graph $H = G_1-\{x,y,z,w\}$. Obviously, $H$ can be viewed as removing all vertices in the exterior of the cycle $rusvr$ (see the right plot of Figure~\ref{f534}). By Proposition~\ref{p3}, $H$ is a 2-connected planar graph. Also, each face of $H$ is of size at most 6. The outer face of $H$ is the quadrilateral $rusv$, with $r,s$ of degree 2. And all vertices in $V(H)\backslash\{r,s\}$ are of degree 3. That is, $H$ satisfies the properties (1)-(3) and $|V(H)|= k+1-4\leq k$. Applying the induction hypothesis to $H$ gives a Hamiltonian path $P_H$ in $H$ with $r,s$ being its endpoints, and thus $xyrP_Hswz$ is the required Hamiltonian path in $G_1$. The induction is complete.
	\end{proof}
	
	\section{Proof of the main result}
	\label{sec:proof}
	
	\begin{figure}[h] 
		\centering
		\includegraphics[scale=0.7]{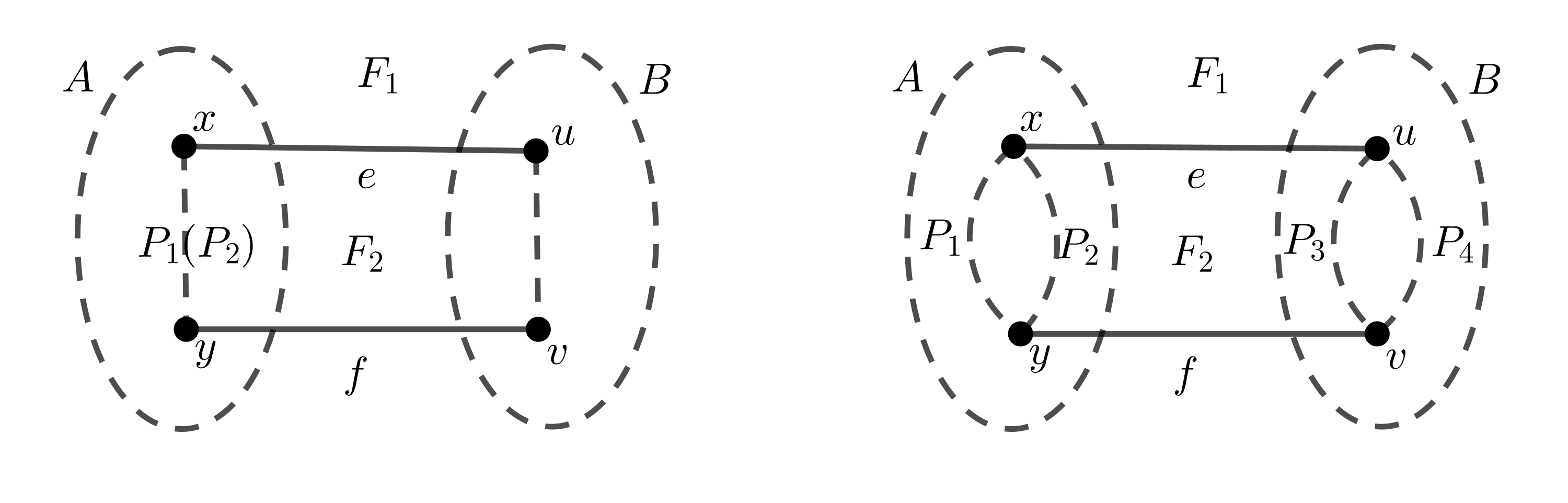}
		\caption{The proof of Theorem~\ref{t1}: Proving by contradiction that $P_1\neq P_2, P_3\neq P_4$.}
		\label{f11}
	\end{figure}

	Suppose there is a 2-connected, cubic, planar graph $G$ with faces of size no more than 6. Since $G$ is simple, we have $|V(G)|\geq 4$. If $G$ is 3-connected, then it directly follows \cite{kardos} that $G$ is Hamiltonian. Below it suffices to assume that $G$ is not 3-connected.
	By Lemma~\ref{l51}, $G$ has a 2-edge-cut $\{e, f\}$ with $e=xu, f=yv$, $\{e, f\}$ is a matching,
	and $G\backslash \{e,f\} $ has exactly two connected components $A, B$. 
	Without loss of generality,  we may assume $x,y\in V(A)$ and $u,v\in V(B)$. By Proposition \ref{p1}, the edge $e$ is incident to two different faces, say $F_1,F_2$. For all $i\in \{1,2\}$, the boundary of $F_i$, after removing $e$, is a path connecting $A$ and $B$. Since $\{e,f\}$ is a 2-edge-cut, such a path must contain the edge $f$. Therefore $f$ is incident to both $F_1,F_2$, and by Proposition \ref{p1} $f$ is incident to no other faces. We designate $F_1$ to be the outer face. Now, the boundary of $F_1$, while removed of the two edges $e, f$, consists of a $xy$-path, denoted by $P_1$, and a $uv$-path, denoted by $P_4$. Similarly, the boundary of $F_2$, while removed of the two edges $e,f$, consists of a $xy$-path, denoted by $P_2$, and a $uv$-path, denoted by $P_3$. Suppose $P_1=P_2$, then the edge $e$ and the edge in $P_1 (P_2)$ with $x$ being one of its endpoints determine two faces $F_1,F_2$ (see the left plot of Figure~\ref{f11}), a contradiction to Corollary \ref{c1} since $x$ has degree 3. Therefore $P_1\neq P_2$. By a similar argument, $P_3\neq P_4$ (see the right plot of Figure~\ref{f11}). We claim that, for all $i\in\{1, 2, 3, 4\}$, $P_i$ has exactly two edges (see the right plot of Figure~\ref{f12}). 
	
		\begin{figure}[h] 
		\centering
		\includegraphics[scale=0.7]{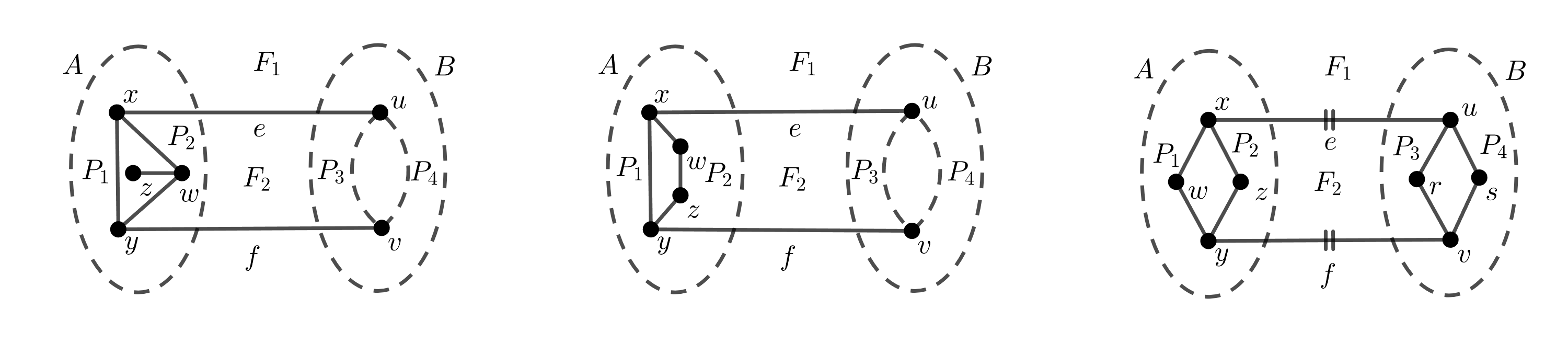}
		\caption{The proof of Theorem~\ref{t1}: Constructing a Hamiltonian cycle via the 2-edge-cut $\{e, f\}$.}
		\label{f12}
	\end{figure} 
	
Suppose $P_1$ is of length one, i.e., $P_1=xy$. Since $P_1\neq P_2$, $P_2$ has length at least 2. On the other hand, since $F_2$ is of size at most 6, $P_2$ has length at most 3. If $P_2$ has length 2, i.e., $P_2=xwy$, then $w$ has a neighbor $z$ other than $x,y$ (see the left plot of Figure~\ref{f12}). But then $G- \{w\}$ is disconnected (otherwise the degree restriction must be violated), a contradiction to the 2-connectivity of $G$. Suppose $P_2$ has length 3, i.e., $P_2=xwzy$. Consider the subgraph $A$, which can also be viewed to be obtained by removing all the vertices in the exterior of the cycle $xyzwx$ (see the middle plot of Figure~\ref{f12}). By Proposition \ref{p3}, $A$ is 2-connected and planar. It can also be seen that every face of $A$ has size at most 6. Additionally, the outer face of $A$ is the quadrilateral $xyzw$, with $x,y$ of degree 2. Also, all vertices in $V(A)\backslash\{x,y\}$ are of degree 3 in $A$. By Lemma \ref{l52}, this is a contradiction. Thus $P_1$ has length at least 2. If $P_2$ has length one, we re-designate $F_2$ to be the outer face, apply a similar argument and arrive at the same contradiction. By a similar argument, the lengths of $P_3,P_4$ are at least 2. Therefore the claim holds because $F_1,F_2$ are of size at most 6.

	Let $P_1=xwy, P_2=xzy, P_3=urv, P_4=usv$ (see the right plot of Figure~\ref{f12}). Consider the subgraph $A$, which can also be viewed to be obtained by removing all vertices in the exterior of the cycle $xzywx$. By Proposition~\ref{p3}, $A$ is 2-connected and planar. Moreover, each face of $A$ has size at most 6. The outer face of $A$ is the quadrilateral $xzyw$. The vertices $x,y$ are of degree 2 in $A$, and all other vertices in $A$ are of degree 3. By Lemma~\ref{l53}, $A$ has a Hamiltonian path $H_A$ with $x,y$ being its endpoints. By a similar argument, $B$ has a Hamiltonian path $H_B$ with $u,v$ being its endpoints, too. Therefore, $G$ has a Hamiltonian cycle $xH_AyvH_Bux$.

	\section*{Acknowledgements}
	This work was funded by the National Key R \& D Program of China (No.~2022YFA1005102) and
the National Natural Science Foundation of China (Nos.~12325112, 12288101). 
The authors are grateful to the useful discussions with Qiming Fang.

%


\end{document}